\begin{document}

\title*{Periodic homogenization of a pseudo-parabolic equation via a spatial-temporal decomposition}
\titlerunning{Upscaling of pseudo-parabolic equation via spatial-temporal decomposition}
\author{Arthur. J. Vromans and
        Fons van de Ven and
        Adrian Muntean}
\institute{
A.J. Vromans and A.A.F. van de Ven \at Centre for Analysis, Scientific computing and Applications (CASA), Eindhoven University of Technology,
              Den Dolech 2, 5612AZ, Eindhoven, The Netherlands, \email{a.j.vromans@tue.nl; a.a.f.v.d.ven@tue.nl}
\and A. Muntean \at Department of Mathematics and Computer Science, Karlstad University, Universitetsgatan 2, 651 88, Karlstad, Sweden, \email{adrian.muntean@kau.se}}
%
%
\maketitle

\abstract{Pseudo-parabolic equations have been used to model unsaturated fluid flow in porous media. In this paper it is shown how a pseudo-parabolic equation can be upscaled when using a spatio-temporal decomposition employed in the Peszy\'{n}ska-Showalter-Yi paper \cite{Pesz-Showalter-Yi2009}. The spatial-temporal decomposition transforms the pseudo-parabolic equation into a system containing an elliptic partial differential equation and a temporal ordinary differential equation. To strengthen our argument, the pseudo-parabolic equation has been given advection/convection/drift terms. The upscaling is done with the technique of periodic homogenization via two-scale convergence. The well-posedness of the extended pseudo-parabolic equation is shown as well. Moreover, we argue that under certain conditions, a non-local-in-time term arises from the elimination of an unknown.}
\begin{acknowledgement}
We acknowledge the Netherlands Organisation of Scientific Research (NWO) for the MPE grant 657.000.004 and we acknowledge the NWO Cluster Nonlinear Dynamics in Natural Systems (NDNS+) for funding a research stay of AJV at Karlstads Universitet.
\end{acknowledgement}
\section{Introduction}
\label{intro}
Groundwater recharge and pollution prediction for aquifers need models for describing unsaturated fluid flow in porous media. Pseudo-parabolic equations were found to be adequate models, see eqn. 25 in \cite{Hassanizadeh2002}. In \cite{Pesz-Showalter-Yi2009} a spatial-temporal decomposition of a pseudo-parabolic system was introduced. It was shown that this decomposition made upscaling of this system rather straightforward in several classical situations such as vanishing time-delay and double-porosity systems. In \cite{Pesz-Showalter-Yi2009} a toy pseudo-parabolic model was derived from a balance equation describing flow through a partially saturated porous medium. In our framework, a convective term that was dropped in \cite{Pesz-Showalter-Yi2009}, is retained in order to show that this term yields no additional problems for upscaling with the spatial-temporal decomposition. We want to convey the message that this decomposition can be applied not only to the physical system in \cite{Pesz-Showalter-Yi2009} but also to other physical systems with pseudo-parabolic equations, such as the concrete corrosion reaction model introduced in  \cite{VromansLIC}. Both these pseudo-parabolic systems are physical systems on a spatial micro scale with an intrinsic microscopic periodicity of size $\epsilon\ll1$. Similar intrinsic microscopic periodic behaviors are found in highly active research fields using composite structures or nano-structures.\\
$\;$\\
In this paper, we use this spatial-temporal decomposition to upscale our pseudo-parabolic equation by using the concept of periodic homogenization via two-scale convergence, which leads to a homogenized system that retains the spatial-temporal decomposition. We start in Section \ref{sec: 2} with formulating our pseudo-parabolic system \textbf{(Q$^\epsilon$)}, the decomposition system \textbf{(P$^\epsilon$)} and stating our assumptions. In Section \ref{H: sec: 4}, an existence and uniqueness result for weak solutions to our problem \textbf{(P$^\epsilon$)} is derived. In Section \ref{H: sec: 5}, we apply the idea of two-scale convergence to a weak version of problem \textbf{(P$^\epsilon$)}, denoted \textbf{(P$^\epsilon_w$)}, that contains the microscopic information at the $\epsilon$-level. Furthermore in this section, an upscaled system \textbf{(P$^0_w$)} of the weak system \textbf{(P$^\epsilon_w$)} is derived in the limit $\epsilon\downarrow0$, and, under certain conditions, an upscaled strong system \textbf{(P$^0_s$)} is obtained after eliminating several variables. This upscaled strong system contains a non-local-in-time term, but the system has lost the partial differential equation framework as a consequence. Contrary, the upscaled weak system \textbf{(P$^0_w$)} keeps the partial differential equation framework due to the spatial-temporal decomposition.
\section{Basic system and assumptions}
\label{sec: 2}
Our pseudo-parabolic system \textbf{(Q$^\epsilon$)} consists of a family of $N$ partial differential equations for the variable vector $\vec{U}^\epsilon(t,\vec{x},\vec{x}/\epsilon) = (U^\epsilon_1,\ldots,U^\epsilon_\alpha,\ldots,U^\epsilon_N)$ with $t>0$ and $\vec{x}=(x_1,\ldots,x_i,\ldots,x_d)\in\Omega\subset\mathbf{R}^d$.
For $\epsilon\in(0,\epsilon_0)$ with $\epsilon_0>0$, system \textbf{(Q$^\epsilon$)} is formulated as
\begin{equation*}
    \text{\textbf{(Q$^\epsilon$)}}\quad\begin{dcases}
    \tens{M}^\epsilon\tens{G}^{-1}\partial_t \vec{U}^\epsilon-\nabla\cdot\left((\tens{E}^\epsilon\cdot\nabla+\tens{D}^\epsilon)\tens{G}^{-1}(\partial_t\vec{U}^\epsilon+\tens{L}\vec{U}^\epsilon)\right)\cr
    \qquad= \vec{H}^\epsilon + (\tens{K}^\epsilon-\tens{M}^\epsilon\tens{G}^{-1}\tens{L})\vec{U}^\epsilon+\tens{J}^\epsilon\cdot\nabla\vec{U}^\epsilon&\text{on }\mathbf{R}_+\times\Omega,\cr
    \vec{U}^\epsilon= \vec{U}_*&\text{on }\{0\}\times\Omega,\cr
    \partial_t\vec{U}^\epsilon+\tens{L}\vec{U}^\epsilon = \vec{0}&\text{on }\mathbf{R}_+\times\partial\Omega.
    \end{dcases}\nonumber
\end{equation*}
The vectors $\vec{V}^\epsilon$ and $\vec{U}^\epsilon$ are both functions of the time coordinate $t$, the global or macro position coordinate $\vec{x}$, and also periodic functions of the micro (or nano) coordinate $\vec{y}\in Y$, where $\vec{y} = \vec{x}/\epsilon$, where the size of the micro domain $Y$ is $\mathcal{O}(\epsilon)$ of the size of the macro domain $\Omega$.\\
Our dimensionless decomposition system \textbf{(P$^\epsilon$)} consists of a family of $N$ partial differential equations (PDEs) and a family of $N$ ordinary differential equations (ODEs) for the two variable vectors  $\vec{V}^\epsilon(t,\vec{x},\vec{x}/\epsilon)=(V^\epsilon_1,\ldots,V^\epsilon_\alpha,\ldots,V^\epsilon_N)$ and $\vec{U}^\epsilon(t,\vec{x},\vec{x}/\epsilon)$. For $\epsilon\in(0,\epsilon_0)$ with $\epsilon_0>0$, it is formulated as
\begin{equation*}
    \text{\textbf{(P$^\epsilon$)}}\quad\begin{dcases}
    \tens{M}^\epsilon\vec{V}^\epsilon-\nabla\cdot\left(\tens{E}^\epsilon\cdot\nabla\vec{V}^\epsilon+\tens{D}^\epsilon\vec{V}^\epsilon\right)= \vec{H}^\epsilon + \tens{K}^\epsilon\vec{U}^\epsilon+\tens{J}^\epsilon\cdot\nabla\vec{U}^\epsilon&\text{on }\mathbf{R}_+\times\Omega,\cr
    \partial_t\vec{U}^\epsilon+\tens{L}\vec{U}^\epsilon= \tens{G}\vec{V}^\epsilon&\text{on }\mathbf{R}_+\times\Omega,\cr
    \vec{U}^\epsilon= \vec{U}_*&\text{on }\{0\}\times\Omega,\cr
    \vec{V}^\epsilon = \vec{0}&\text{on }\mathbf{R}_+\times\partial\Omega.
    \end{dcases}\nonumber
\end{equation*}
Above, the $\epsilon$-dependent notation $c^\epsilon(t,\vec{x}) = c(t,\vec{x},\vec{x}/\epsilon)$ is used for the $\epsilon$-independent 1-,2- and 3-tensors of assumption (A1).
\begin{itemize}
    \item[(A1)] \qquad For all $\alpha,\beta\in\{1,\ldots,N\}$ and for all $i,j\in\{1,\ldots,d\}$, we have
    \begin{eqnarray*}
    \tens{M}_{\alpha\beta},\,\tens{E}_{ij},\,\tens{D}_{i\alpha\beta},\,\vec{H}_\alpha,\,\tens{K}_{\alpha\beta},\,\tens{J}_{i\alpha\beta}&\in&L^\infty(\mathbf{R}_+\times\Omega;C_\#(Y)),\cr
    \tens{L}_{\alpha\beta},\,\tens{G}_{\alpha\beta}&\in&L^\infty(\mathbf{R}_+;W^{1,\infty}(\Omega)),\cr
    \vec{U}_*&\in& C^1(\Omega)^N,
    \end{eqnarray*}
    with $\tens{G}$ invertible.
    \item[(A2)] \qquad Let the tensors $\tens{M}^\epsilon$ and $\tens{E}^\epsilon$ be in diagonal form\footnote{Due to the Theorem of Jacobi about quadratic forms (cf. \cite{Lam1999}) in combination with the coercivity of both $\tens{M}^\epsilon$ and $\tens{E}^\epsilon$, we are allowed to assume diagonal forms of $\tens{M}^\epsilon$ and $\tens{E}^\epsilon$ as the orthogonal transformations, necessary to put their quadratic forms in diagonal form, modify the domain $\Omega^\epsilon$ and the coefficients of $\tens{D}^\epsilon$, $\vec{H}^\epsilon$, $\tens{K}^\epsilon$ and $\tens{J}^\epsilon$ without changing their regularity.} with elements $m^\epsilon_\alpha>0$ and $e^\epsilon_i>0$, respectively, satisfying $1/m_\alpha^\epsilon,1/e_i^\epsilon\in L^\infty(\mathbf{R}_+\times\Omega;C_\#(Y))$.
    \item[(A3)] \qquad The inequality
    \begin{equation*}
        \|\tens{D}_{i\beta\alpha}^\epsilon\|_{L^\infty(\mathbf{R}_+\!\times\Omega^\epsilon;C_\#(Y))}^2\!<\!\frac{4}{dN^2\!\left\|\!1/m_\alpha^\epsilon\!\right\|_{L^\infty(\mathbf{R}_+\!\times\Omega^\epsilon;C_\#(Y))}\!\left\|\!1/e_i^\epsilon\!\right\|_{L^\infty(\mathbf{R}_+\!\times\Omega^\epsilon;C_\#(Y))}}\label{H: eq:  ineq}
    \end{equation*}
    holds for all $\alpha,\beta\in\{1,\ldots,N\}$, for all $i\in\{1,\ldots,n\}$, and for all $\epsilon\in(0,\epsilon_0)$.
\end{itemize}
    Remark, inequality (\ref{H: eq:  ineq}) implies that automatically (\ref{H: eq:  ineq}) holds for the $Y$-averaged functions $\overline{\tens{D}_{i\beta\alpha}^\epsilon}$, $\overline{\tens{M}_{\beta\alpha}^\epsilon}$, and $\overline{\tens{E}^\epsilon_{ij}}$ in $L^\infty(\mathbf{R}_+\times\Omega)$, using $|Y|\overline{f}(t,\vec{x}) = \int_{Y}f(t,\vec{x},\vec{y})\mathrm{d}\vec{y}$.

\section{Existence and uniqueness of weak solutions to \textbf{(P$^\epsilon_w$)}}\label{H: sec: 4}
In this section, we show the existence and uniqueness of a weak solution $(\vec{U},\vec{V})$ to \textbf{(P$^\epsilon$)}. We define a weak solution to \textbf{(P$^\epsilon$)} for $\epsilon\in(0,\epsilon_0)$ and $T\in\mathbf{R}_+$ as a pair of sequences $(\vec{U}^\epsilon,\vec{V}^\epsilon)\in H^1((0,T)\times\Omega)^N\times L^\infty((0,T),H_0^1(\Omega))^N$ satisfying
\begin{equation*}
    \text{\textbf{(P$^\epsilon_w$)}}
    \begin{dcases}
    \int_\Omega\vec{\phi}^\top\left[\tens{M}^\epsilon\vec{V}^\epsilon\!\!-\!\vec{H}^\epsilon\!\!-\!\tens{K}^\epsilon\vec{U}^\epsilon\!\!-\!\tens{J}^\epsilon\!\cdot\!\nabla\vec{U}^\epsilon\right]
    \!+\!(\nabla\phi)^\top\!\!\cdot\!\left(\tens{E}^\epsilon\!\!\cdot\!\nabla\vec{V}^\epsilon\!\!+\!\tens{D}^\epsilon\vec{V}^\epsilon\right)\mathrm{d}\vec{x}\!=\!0,\cr
    \int_\Omega\vec{\psi}^\top\left[\partial_t\vec{U}^\epsilon+\tens{L}^\epsilon\vec{U}^\epsilon- \tens{G}^\epsilon\vec{V}^\epsilon\right]\mathrm{d}\vec{x} = 0,\cr
    \vec{U}^\epsilon(0,\vec{x})=\vec{U}_*(\vec{x})\text{ for all }\vec{x}\in\Omega,
    \end{dcases}
\end{equation*}
for a.e. $t\in(0,T)$, for all test-functions $\vec{\phi}\in H^1_0(\Omega)^N$ and $\vec{\psi}\in L^2(\Omega)^N$.\\
The existence and uniqueness can only hold when the first equation of \text{\textbf{(P$^\epsilon_w$)}}  satisfies all the conditions of Lax-Milgram. The next lemma provides the coercivity condition, while the continuity condition is trivially satisfied.
\begin{lemma}\label{H: l: : apriori} Assume assumptions (A1) - (A3) hold, then there exist positive constants $\tilde{m}_\alpha$, $\tilde{e}_i$, $\tilde{H}$, $\tilde{K}_\alpha$, $\tilde{J}_{i\alpha}$ for $\alpha\in\{1,\ldots,N\}$ and $i\in\{1,\ldots,d\}$ such that the following a-priori estimate holds for a.e. $t\in(0,T)$.
    \begin{eqnarray}
        \sum_{\alpha=1}^N\tilde{m}_\alpha\|\vec{V}_\alpha^\epsilon\|^2_{L^2(\Omega)}&+&\sum_{i=1}^d\sum_{\alpha=1}^N\tilde{e}_i\|\partial_{x_i}\vec{V}_\alpha^\epsilon\|^2_{L^2(\Omega)}\cr
        &\leq& \tilde{H}+\sum_{\alpha=1}^N\tilde{K}_\alpha\|\vec{U}_\alpha^\epsilon\|^2_{L^2(\Omega)}+\sum_{i=1}^d\sum_{\alpha=1}^N\tilde{J}_{i\alpha}\|\partial_{x_i}\vec{U}_\alpha^\epsilon\|^2_{L^2(\Omega)}\label{H: eq:  aprioriV}
    \end{eqnarray}
\end{lemma}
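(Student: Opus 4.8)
The plan is to derive the estimate by choosing the admissible test function $\vec{\phi} = \vec{V}^\epsilon$ in the first equation of \textbf{(P$^\epsilon_w$)}; this is legitimate since $\vec{V}^\epsilon \in H_0^1(\Omega)^N$ for a.e.\ $t$. After this substitution the terms carrying $\tens{M}^\epsilon$ and $\tens{E}^\epsilon$ furnish the left-hand side and, by the diagonal structure of assumption (A2), become
\[
\sum_{\alpha=1}^N \int_\Omega m_\alpha^\epsilon (V_\alpha^\epsilon)^2\,\mathrm{d}\vec{x} + \sum_{i=1}^d\sum_{\alpha=1}^N \int_\Omega e_i^\epsilon (\partial_{x_i} V_\alpha^\epsilon)^2\,\mathrm{d}\vec{x}.
\]
Since $1/m_\alpha^\epsilon, 1/e_i^\epsilon \in L^\infty$, each coefficient is bounded below by $1/\|1/m_\alpha^\epsilon\|_{L^\infty}$ respectively $1/\|1/e_i^\epsilon\|_{L^\infty}$, so this is the coercive skeleton of the estimate. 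Everything else — the source $\vec{H}^\epsilon$, the couplings $\tens{K}^\epsilon\vec{U}^\epsilon$ and $\tens{J}^\epsilon\cdot\nabla\vec{U}^\epsilon$, and the cross term $-(\nabla\vec{V}^\epsilon)^\top\!\cdot\tens{D}^\epsilon\vec{V}^\epsilon$ — is moved to the right-hand side.

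First I would dispatch the $\vec{H}^\epsilon$, $\tens{K}^\epsilon$ and $\tens{J}^\epsilon$ contributions by Cauchy--Schwarz followed by Young's inequality $|ab| \le \tfrac{\eta}{2}a^2 + \tfrac{1}{2\eta}b^2$: each product splits into a small multiple of $\|V_\alpha^\epsilon\|_{L^2(\Omega)}^2$, to be absorbed on the left, and a term involving $\|U_\alpha^\epsilon\|_{L^2(\Omega)}^2$, $\|\partial_{x_i}U_\alpha^\epsilon\|_{L^2(\Omega)}^2$, or the constant, which generate the constants $\tilde{K}_\alpha$, $\tilde{J}_{i\alpha}$, and $\tilde{H}$. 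These bounds use only the $L^\infty$-regularity of the coefficients guaranteed by (A1).

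The hard part is the cross term $-\sum_{i,\alpha,\beta}\int_\Omega \partial_{x_i}V_\alpha^\epsilon\,\tens{D}_{i\alpha\beta}^\epsilon V_\beta^\epsilon\,\mathrm{d}\vec{x}$, which mixes $\nabla\vec{V}^\epsilon$ and $\vec{V}^\epsilon$ and must be reabsorbed against both coercive blocks simultaneously. Applying Young's inequality with a parameter $\delta$ to each summand bounds it by $\sum_{i,\alpha,\beta}\|\tens{D}_{i\alpha\beta}^\epsilon\|_{L^\infty}\bigl(\tfrac{\delta}{2}\|\partial_{x_i}V_\alpha^\epsilon\|_{L^2(\Omega)}^2 + \tfrac{1}{2\delta}\|V_\beta^\epsilon\|_{L^2(\Omega)}^2\bigr)$. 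After summing the $dN^2$ contributions and choosing $\delta$ to balance the two blocks, the net coefficient remaining in front of each $\|\partial_{x_i}V_\alpha^\epsilon\|_{L^2(\Omega)}^2$ and each $\|V_\alpha^\epsilon\|_{L^2(\Omega)}^2$ is positive precisely when the product of the lower bounds of $m_\alpha^\epsilon$ and $e_i^\epsilon$ dominates the squared $L^\infty$-norm of $\tens{D}_{i\alpha\beta}^\epsilon$ — which is exactly the content of inequality (\ref{H: eq:  ineq}) in (A3), the factor $4$ arising from the optimal Young balance and the factor $dN^2$ from the index count.

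I would then set the surviving strictly positive coefficients to be $\tilde{m}_\alpha$ and $\tilde{e}_i$, absorb all $V$-dependent remainders on the left, and collect the $U$-dependent terms and the constant on the right, yielding (\ref{H: eq:  aprioriV}). The main obstacle is thus not any single inequality but the bookkeeping that matches the accumulated Young constants against the exact normalisation in (A3): verifying that (A3) forces $\tilde{m}_\alpha, \tilde{e}_i > 0$ is where the argument stands or falls, and it is this coercivity that later supplies the missing Lax--Milgram hypothesis for the first equation of \textbf{(P$^\epsilon_w$)}.
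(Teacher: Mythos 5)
Your proposal is correct and follows what is evidently the intended argument: the paper itself only cites pages 92--93 of \cite{VromansLIC} for the proof, but the precise form of assumption (A3) --- the factor $4$ from the optimal Young balance and the factor $dN^2$ from summing the cross term over $i,\alpha,\beta$ --- confirms that the referenced proof is exactly the energy estimate you describe (test with $\vec{V}^\epsilon$, use the diagonal coercivity of (A2), absorb the $\tens{D}^\epsilon$ cross term via Young's inequality, with (A3) guaranteeing strictly positive residual coefficients $\tilde{m}_\alpha$, $\tilde{e}_i$). The only point worth making explicit is the ordering of the absorptions: the strictness of (A3) leaves the margin needed to also absorb the small multiples of $\|\vec{V}^\epsilon_\alpha\|^2_{L^2(\Omega)}$ coming from the $\vec{H}^\epsilon$, $\tens{K}^\epsilon$ and $\tens{J}^\epsilon$ terms, which your write-up handles implicitly and correctly.
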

\begin{proof}
See pages 92, 93 in \cite{VromansLIC} for proof and relation with parameters of \text{\textbf{(P$^\epsilon_w$)}}.
\qed\end{proof}
\begin{theorem}\label{H: t: existuniq} Assume assumptions  (A1) - (A3) hold, then there exists a unique pair $(\vec{U}^\epsilon,\vec{V}^\epsilon)\in H^1((0,T)\times\Omega)^N\times L^\infty((0,T),H_0^1(\Omega))^N$ such that $(\vec{U}^\epsilon,\vec{V}^\epsilon)$ is a weak solution to \textbf{(P$_w^\epsilon$)}.
\end{theorem}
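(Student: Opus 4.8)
The plan is to exploit the fully decoupled structure of \textbf{(P$^\epsilon_w$)}: for a frozen $\vec{U}^\epsilon$ the first line is a time-parametrised elliptic system for $\vec{V}^\epsilon$, whereas for a frozen $\vec{V}^\epsilon$ the second line is a linear first-order ODE in $t$ for $\vec{U}^\epsilon$. I would therefore build the solution by a Banach fixed-point iteration that alternates an elliptic solve with an ODE solve, using Lemma~\ref{H: l: : apriori} to control the first of these. Since the whole system is linear, the resulting composite map is affine and the fixed-point step is equivalent to inverting an operator of Volterra type.

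First I would treat the elliptic step. Fix $t\in(0,T)$ and a datum $\vec{U}^\epsilon(t,\cdot)\in H^1(\Omega)^N$, and consider on $H^1_0(\Omega)^N$ the bilinear form
$$a(\vec{\phi},\vec{V}) = \int_\Omega\vec{\phi}^\top\tens{M}^\epsilon\vec{V}+(\nabla\vec{\phi})^\top\cdot\left(\tens{E}^\epsilon\cdot\nabla\vec{V}+\tens{D}^\epsilon\vec{V}\right)\mathrm{d}\vec{x}$$
together with the functional $\ell(\vec{\phi})=\int_\Omega\vec{\phi}^\top(\vec{H}^\epsilon+\tens{K}^\epsilon\vec{U}^\epsilon+\tens{J}^\epsilon\cdot\nabla\vec{U}^\epsilon)\,\mathrm{d}\vec{x}$. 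Boundedness of $a$ and $\ell$ is immediate from (A1)--(A2), while coercivity of $a$ is exactly the content of Lemma~\ref{H: l: : apriori}, assumption (A3) being what dominates the indefinite $\tens{D}^\epsilon$-cross-terms. Lax--Milgram then yields a unique $\vec{V}^\epsilon(t,\cdot)\in H^1_0(\Omega)^N$; since $a$ is independent of $\vec{U}^\epsilon$ and $\ell$ depends linearly on it, I obtain both the a-priori bound $\|\vec{V}^\epsilon(t)\|_{H^1}\le C(1+\|\vec{U}^\epsilon(t)\|_{H^1})$ and the Lipschitz bound $\|\vec{V}_1^\epsilon(t)-\vec{V}_2^\epsilon(t)\|_{H^1}\le C\|\vec{U}_1^\epsilon(t)-\vec{U}_2^\epsilon(t)\|_{H^1}$, with a constant $C$ uniform in $t$ because the coercivity constants of Lemma~\ref{H: l: : apriori} are $t$-independent.

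Next I would solve the ODE step. For a given $\vec{V}^\epsilon$ the second line of \textbf{(P$^\epsilon_w$)} is, for a.e.\ $\vec{x}$, the linear system $\partial_t\vec{U}^\epsilon+\tens{L}^\epsilon\vec{U}^\epsilon=\tens{G}^\epsilon\vec{V}^\epsilon$ with $\vec{U}^\epsilon(0)=\vec{U}_*$ and bounded coefficient matrix $\tens{L}^\epsilon$. Variation of constants gives the unique representation
$$\vec{U}^\epsilon(t)=\Theta(t)\vec{U}_*+\int_0^t\Theta(t)\Theta(s)^{-1}\tens{G}^\epsilon\vec{V}^\epsilon(s)\,\mathrm{d}s,$$
where $\Theta$ is the fundamental matrix of $\partial_t\Theta=-\tens{L}^\epsilon\Theta$, $\Theta(0)=\mathds{1}$. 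Because $\tens{L}^\epsilon$ and $\tens{G}^\epsilon$ lie in $W^{1,\infty}(\Omega)$ by (A1), differentiating this formula in $\vec{x}$ shows that $\nabla\vec{U}^\epsilon$ is controlled by $\|\vec{V}^\epsilon\|_{H^1}$, so the map returns $\vec{U}^\epsilon(t,\cdot)\in H^1(\Omega)^N$ and satisfies $\|\vec{U}^\epsilon(t)\|_{H^1}\le C(\|\vec{U}_*\|_{H^1}+\int_0^t\|\vec{V}^\epsilon(s)\|_{H^1}\,\mathrm{d}s)$, together with the analogous Lipschitz estimate.

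Finally I would close the argument by composing the two solves into a map $\Phi$ on $X=C([0,T];H^1(\Omega)^N)$ sending $\vec{U}^\epsilon$ to the elliptic solution $\vec{V}^\epsilon$ and then to the ODE solution $\widetilde{\vec{U}}^\epsilon$. Chaining the two Lipschitz bounds gives $\|\Phi(\vec{U}_1^\epsilon)(t)-\Phi(\vec{U}_2^\epsilon)(t)\|_{H^1}\le C\int_0^t\|\vec{U}_1^\epsilon(s)-\vec{U}_2^\epsilon(s)\|_{H^1}\,\mathrm{d}s$, so that either a high enough iterate of $\Phi$ is a contraction or, equivalently, $\Phi$ is a contraction in the Bielecki-weighted norm $\sup_{t}e^{-\lambda t}\|\cdot\|_{H^1}$ for $\lambda$ large. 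The Banach fixed-point theorem then produces a unique $\vec{U}^\epsilon$, and the associated $\vec{V}^\epsilon$ from the elliptic step completes the unique weak solution pair; the stated regularity $\vec{V}^\epsilon\in L^\infty((0,T);H^1_0(\Omega))^N$ and, via $\partial_t\vec{U}^\epsilon=\tens{G}^\epsilon\vec{V}^\epsilon-\tens{L}^\epsilon\vec{U}^\epsilon\in L^2$, also $\vec{U}^\epsilon\in H^1((0,T)\times\Omega)^N$ follow from the uniform bounds. I expect the main obstacle to be the bookkeeping that closes the regularity loop: the elliptic right-hand side needs $\nabla\vec{U}^\epsilon\in L^2$, so one must verify that the ODE step genuinely returns that spatial regularity, which is precisely where the $W^{1,\infty}$-regularity of $\tens{L}$ and $\tens{G}$ and the differentiation of the Duhamel formula in $\vec{x}$ are indispensable.
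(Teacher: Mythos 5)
Your proof is correct, but it follows a genuinely different route from the paper's. You build the solution by Picard--Banach iteration: a Lax--Milgram solve of the elliptic subproblem at each frozen $t$ (whose coercivity is what underlies Lemma~\ref{H: l: : apriori}; strictly, that lemma states the resulting a-priori bound rather than coercivity itself, but the substance --- (A2) plus (A3) dominating the $\tens{D}^\epsilon$ cross-terms --- is the same), then a variation-of-constants solve of the pointwise-in-$\vec{x}$ ODE, and finally a Volterra-type Lipschitz chain that makes a sufficiently high iterate (equivalently, the Bielecki-weighted map) a contraction. The paper instead argues by energy estimates: it tests the first equation with $\vec{V}^\epsilon$, the second with $\vec{U}^\epsilon$ and $\partial_t\vec{U}^\epsilon$, and the spatial gradient of the second with $\nabla\vec{U}^\epsilon$ and $\partial_t\nabla\vec{U}^\epsilon$, then closes with Young's and Gronwall's inequalities to obtain existence for $\vec{U}^\epsilon$, Lax--Milgram for $\vec{V}^\epsilon$, and uniqueness from bilinearity. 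Your contraction exploits linearity to deliver existence and uniqueness in one stroke and dispenses with Gronwall; the paper's energy route yields, as a by-product of the very same testing, the $\epsilon$-uniform bound on $\|\vec{U}^\epsilon\|_{H^1((0,T)\times\Omega)^N}+\|\vec{V}^\epsilon\|_{L^\infty((0,T),H^1_0(\Omega))^N}$ that is reused verbatim in Lemma~\ref{H: l: : two-scale-conv} for two-scale compactness, and it transfers to Galerkin or nonlinear settings where no contraction is available. The one step of yours that genuinely needs care --- propagating $H^1(\Omega)$ regularity through the $\vec{x}$-dependent fundamental matrix --- is indeed covered by the $W^{1,\infty}(\Omega)$ regularity of $\tens{L}$ and $\tens{G}$ and by $\vec{U}_*\in C^1(\Omega)^N$ from (A1), as you note.
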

\begin{proof}
Use $\vec{\phi} = \vec{V}^\epsilon$ and apply Lemma \ref{H: l: : apriori}. Then use $\vec{\psi} \in \{\vec{U}^\epsilon,\partial_t\vec{U}^\epsilon\}$. Moreover, apply a gradient to the second equation of \textbf{(P$^\epsilon$)} and test that equation with $\nabla\vec{U}^\epsilon$ and $\partial_t\nabla\vec{U}^\epsilon$. Application of Young's inequality, use of (\ref{H: eq:  aprioriV}) and application of Gronwall's inequality, see \cite[Thm. 1]{Dragomir2003}, yields the existence for $\vec{U}^\epsilon$. Then Lax-Milgram yields the existence for $\vec{V}^\epsilon$. Uniqueness follows from the bilinearity of \textbf{(P$_w^\epsilon$)}. For more details, see pages 93 and 94 in \cite{VromansLIC}.
 \qed\end{proof}
\section{Upscaling the system (P$^\epsilon_w$) via two-scale convergence}
\label{H: sec: 5}
Based on two-scale convergence, see \cite{Allaire1992}, \cite{LukkassenNguetsengWall2002}, \cite{Nguetseng1989} for details, we obtain the following Lemma ensuring that the weak solution to problem \textbf{(P$^\epsilon_w$)} has two-scale limits in the limit $\epsilon\downarrow0$.
\begin{lemma}
    \label{H: l: : two-scale-conv} Assume assumptions (A0), (A1), (A2) to hold. For each $\epsilon\in(0,\epsilon_0)$, let the pair of sequences $(\vec{U}^\epsilon,\vec{V}^\epsilon)\in H^1((0,T)\times\Omega)\times L^\infty((0,T);H^1_0(\Omega))$ be the unique weak solution to \textbf{(P$^{\,\epsilon}_w$)}. Then this sequence of weak solutions satisfies the estimate $
    \|\vec{U}^\epsilon\|_{H^1((0,T)\times\Omega)^N}+\|\vec{V}^\epsilon\|_{L^\infty((0,T),H^1_0(\Omega))^N}\leq C,$
for all $\epsilon\in(0,\epsilon_0)$ and there exist vector functions
\begin{eqnarray}
    \vec{u}\text{ in }H^1((0,T)\times\Omega)^N,\quad\qquad\mathcal{U}&\text{ in }&H^1((0,T);L^2(\Omega;H^1_\#(Y)/\mathbf{R}))^N,\cr
    \vec{v}\text{ in }L^\infty((0,T);H^1_0(\Omega))^N,\qquad\mathcal{V}&\text{ in }&L^\infty((0,T)\times\Omega;H^1_\#(Y)/\mathbf{R})^N,\nonumber
\end{eqnarray}
and a subsequence $\epsilon'\subset\epsilon$, for which the following two-scale convergences
\begin{eqnarray}
    \vec{U}^{\epsilon'}\overset{2}{\longrightarrow}\!\quad\vec{u}(t,\vec{x}),
    &&\!\quad\nabla\vec{U}^{\epsilon'}\overset{2}{\longrightarrow}\!\!\quad\nabla\vec{u}(t,\vec{x})+\nabla_{\vec{y}}\mathcal{U}(t,\vec{x},\vec{y}),\cr
    \partial_t\vec{U}^{\epsilon'}\overset{2}{\longrightarrow}\partial_t\vec{u}(t,\vec{x}),
    &&\partial_t\nabla\vec{U}^{\epsilon'}\overset{2}{\longrightarrow}\partial_t\nabla\vec{u}(t,\vec{x})+\partial_t\nabla_{\vec{y}}\mathcal{U}(t,\vec{x},\vec{y}),\cr
    \vec{V}^{\epsilon'}\overset{2}{\longrightarrow}\!\quad\vec{v}(t,\vec{x}),
    &&\!\quad\nabla\vec{V}^{\epsilon'}\overset{2}{\longrightarrow}\!\!\quad\nabla\vec{v}(t,\vec{x})+\nabla_{\vec{y}}\mathcal{V}(t,\vec{x},\vec{y})\nonumber
\end{eqnarray}
 hold for a.e. $t\in(0,T)$.
\end{lemma}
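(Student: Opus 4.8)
The plan is to establish the lemma in three stages: first turn the energy estimates behind Theorem~\ref{H: t: existuniq} into genuinely $\epsilon$-uniform bounds, then extract two-scale limits by compactness, and finally identify those limits together with their corrector structure.

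For the uniform bounds, the crucial observation is that every coefficient enters only through $c^\epsilon(t,\vec{x})=c(t,\vec{x},\vec{x}/\epsilon)$ with $c\in L^\infty(\mathbf{R}_+\times\Omega;C_\#(Y))$, so that $\|c^\epsilon\|_{L^\infty((0,T)\times\Omega)}\le\|c\|_{L^\infty(\mathbf{R}_+\times\Omega;C_\#(Y))}$ independently of $\epsilon$; by (A2) the same holds for $1/m_\alpha^\epsilon$ and $1/e_i^\epsilon$. I would therefore rerun the test-function cascade from the proof of Theorem~\ref{H: t: existuniq} --- $\vec{\phi}=\vec{V}^\epsilon$ combined with Lemma~\ref{H: l: : apriori}, then $\vec{\psi}\in\{\vec{U}^\epsilon,\partial_t\vec{U}^\epsilon\}$, then the spatial gradient of the ODE tested with $\nabla\vec{U}^\epsilon$ and $\partial_t\nabla\vec{U}^\epsilon$ --- and verify that at each application of Young's and Gronwall's inequalities the resulting constants depend only on these $\epsilon$-independent norms, on $T$, and on $\|\vec{U}_*\|_{C^1(\Omega)^N}$. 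This yields the stated bound and, importantly, the additional $\epsilon$-uniform control of $\nabla\vec{U}^\epsilon$ and $\partial_t\nabla\vec{U}^\epsilon$ in $L^2((0,T)\times\Omega)^N$ that the two-scale convergence of $\partial_t\nabla\vec{U}^\epsilon$ requires.

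With these bounds I would invoke the standard two-scale compactness theorems (see \cite{Nguetseng1989,Allaire1992}): a sequence bounded in $L^2$ admits a two-scale limit, while a sequence bounded in $H^1$ two-scale converges to a limit $w$ whose gradient obeys $\nabla w^\epsilon\overset{2}{\longrightarrow}\nabla w+\nabla_{\vec{y}}w_1$ with a corrector $w_1\in L^2(\Omega;H^1_\#(Y)/\mathbf{R})$. Applying this to $\vec{V}^\epsilon$ and to $\vec{U}^\epsilon$, and performing a diagonal extraction so that a single subsequence $\epsilon'$ serves all sequences at once, produces $\vec{u}$, $\vec{v}$ and the correctors $\mathcal{U}$, $\mathcal{V}$.

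The delicate point --- which I expect to be the main obstacle --- is to show that the two-scale limit of $\partial_t\nabla\vec{U}^{\epsilon'}$ is the $t$-derivative of the limit of $\nabla\vec{U}^{\epsilon'}$, i.e.\ that the \emph{same} corrector $\mathcal{U}$ appears and that it gains $H^1$-in-time regularity. Since the oscillation is purely spatial, $\partial_t$ commutes with the two-scale limit; I would make this rigorous by testing against separated functions $\theta(t)\,\psi(\vec{x})\,\chi(\vec{x}/\epsilon)$ with $\chi$ being $Y$-periodic, integrating by parts in $t$, and passing to the limit in both $\nabla\vec{U}^{\epsilon'}$ and $\partial_t\nabla\vec{U}^{\epsilon'}$. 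Because the corrector is determined (up to the $\mathbf{R}$-quotient) by the two-scale limit of the gradient, the two correctors must coincide, and the uniform bound on $\partial_t\nabla\vec{U}^{\epsilon'}$ then forces $\partial_t\nabla_{\vec{y}}\mathcal{U}\in L^2$, giving $\mathcal{U}\in H^1((0,T);L^2(\Omega;H^1_\#(Y)/\mathbf{R}))^N$. To close, weak lower semicontinuity of the norms under two-scale convergence transfers the uniform bounds to the claimed regularity of $\vec{u}$, $\vec{v}$, $\mathcal{U}$, $\mathcal{V}$; the ``for a.e.\ $t$'' qualifier is handled by applying the spatial compactness at a.e.\ fixed $t$ using the $L^\infty$-in-time bounds, with a measurability argument ensuring the limits are jointly measurable in $(t,\vec{x},\vec{y})$.
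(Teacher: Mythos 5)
Your proposal is correct and follows essentially the same route the paper takes (the paper defers the details to pages 95--96 of \cite{VromansLIC}, but the method there is exactly this: $\epsilon$-uniform energy estimates obtained by rerunning the test-function cascade of Theorem~\ref{H: t: existuniq} with constants controlled via assumptions (A1)--(A3), followed by the Nguetseng--Allaire two-scale compactness theorems and identification of the corrector for the time-differentiated gradient). You also correctly spot the one point the lemma's statement glosses over, namely that the two-scale convergence of $\partial_t\nabla\vec{U}^{\epsilon'}$ requires the additional uniform $L^2$ bound on $\partial_t\nabla\vec{U}^\epsilon$ coming from testing the gradient of the ODE, which is indeed how the cited proof supplies it.
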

\begin{proof}
See pages 95 and 96 of \cite{VromansLIC}.
\qed\end{proof}
Using Lemma \ref{H: l: : two-scale-conv}, we upscale \textbf{(P$^\epsilon_w$)} to \textbf{(P$^0_w$)} via two-scale convergence.
\begin{theorem}\label{H: t: upscale} Assume the conditions of Lemma \ref{H: l: : two-scale-conv} are met. Then the two-scale limits $\vec{u}\in H^1((0,T)\times\Omega)^N$, $\mathcal{U}\in H^1((0,T);L^2(\Omega;H^1_\#(Y)/\mathbf{R}))^N$ and $\vec{v}\in L^\infty((0,T);H^1_0(\Omega))^N$ introduced in Lemma \ref{H: l: : two-scale-conv} form the weak solution triple to \begin{equation*}
    \text{\textbf{(P$^0_w$)}}\qquad\quad\begin{dcases}
    \int_\Omega\vec{\phi}^\top\left[\overline{\tens{M}}\vec{v}-\overline{\vec{H}} - \overline{\tens{K}}\vec{u}-\overline{\tens{J}}\cdot\nabla\vec{u}-\frac{1}{|Y|}\int_Y\tens{J}\cdot\nabla_{\vec{y}}\mathcal{U}\mathrm{d}\vec{y}\right]&\cr
    \qquad\qquad+(\nabla\phi)^\top\cdot\left(\tens{E}^*\cdot\nabla\vec{v}+\tens{D}^*\vec{v}\right)\mathrm{d}\vec{x} = 0,\cr
    \int_\Omega\vec{\psi}^\top\left[\partial_t\vec{u}+\tens{L}\vec{u}- \tens{G}\vec{v}\right]\mathrm{d}\vec{x} = 0,\cr
    \int_Y\vec{\xi}^\top\cdot\nabla_{\vec{y}}\left[\partial_t\mathcal{U}+\tens{L}\mathcal{U}- \tilde{\delta}\vec{v}-\tilde{\omega}\cdot\nabla\vec{v}\right]\mathrm{d}\vec{y} = 0,\cr
    \vec{u}(0,\vec{x})=\vec{U}_*(\vec{x})\qquad\text{on }\Omega,\cr
    \nabla_{\vec{y}}\mathcal{U}(0,\vec{x},\vec{y}) = \vec{0}\qquad\text{on }\Omega\times Y,
    \end{dcases}
\end{equation*}
for a.e. $t\in(0,T)$, for all test-functions $\vec{\phi}\in H^1_0(\Omega)^N$, $\vec{\psi}\in L^2(\Omega)^N$, and $\vec{\xi}\in H^1_\#(Y)^{d\times N}$, where the \emph{effective} coefficients $\tens{E}^*$ and $\tens{D}^*$ are given by
\begin{eqnarray*}
\tens{E}^*=\frac{1}{|Y|}\int_Y\tens{E}\cdot(\tens{1}+\nabla_{\vec{y}}\vec{W})\mathrm{d}\vec{y},&&\qquad\tens{D}^*=\frac{1}{|Y|}\int_Y\tens{D}+\tens{E}\cdot\nabla_{\vec{y}}\tens{\delta}\mathrm{d}\vec{y},\cr
\tilde{\delta}=\nabla_{\vec{y}}(\tens{G}\delta),&&\qquad\tilde{\omega}=\nabla_{\vec{y}}\vec{W}\otimes\tens{G},
\end{eqnarray*}
and the tensor $\tens{\delta}_{\alpha\beta}\in L^\infty((0,T)\times\Omega;H^1_\#(Y)/\mathbf{R}))$ and vector $\vec{W}_i\in L^\infty((0,T)\times\Omega;H^1_\#(Y)/\mathbf{R}))$ satisfy the cell problems
\begin{subequations}
\begin{align}
0&=\int_Y\Phi^\top\cdot(\nabla_{\vec{y}}\cdot\left[\tens{E}\cdot(\tens{1}+\nabla_{\vec{y}}\vec{W})\right])\mathrm{d}\vec{y},&
0&=\int_Y\Psi^\top(\nabla_{\vec{y}}\cdot\left[\tens{D}+\tens{E}\cdot\nabla_{\vec{y}}\tens{\delta}\right])\mathrm{d}\vec{y}\nonumber
\end{align}\nonumber
\end{subequations}
for all $\Phi\in C_\#(Y)^d$, $\Psi\in C_\#(Y)^{N\times N}$.
\end{theorem}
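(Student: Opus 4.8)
The plan is to insert two-scale oscillating test functions into the weak formulation \textbf{(P$^\epsilon_w$)}, pass to the two-scale limit term by term with the convergences granted by Lemma \ref{H: l: : two-scale-conv}, and then separate the macroscopic and microscopic scales to read off the three equations of \textbf{(P$^0_w$)} together with their cell problems. Throughout, assumption (A1) is what makes every coefficient an admissible two-scale test function, so that each product of a coefficient with one of the two-scale convergent sequences $\vec{V}^\epsilon,\vec{U}^\epsilon,\nabla\vec{U}^\epsilon,\nabla\vec{V}^\epsilon$ passes to the integral over $\Omega\times Y$ of the product of the limits.

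For the elliptic equation of \textbf{(P$^\epsilon_w$)} I would test with $\vec{\phi}^\epsilon(\vec{x})=\vec{\phi}(\vec{x})+\epsilon\vec{\phi}_1(\vec{x},\vec{x}/\epsilon)$, where $\vec{\phi}\in C^\infty_0(\Omega)^N$ and $\vec{\phi}_1\in C^\infty_0(\Omega;C^\infty_\#(Y))^N$, so that $\nabla\vec{\phi}^\epsilon=\nabla\vec{\phi}+\nabla_{\vec{y}}\vec{\phi}_1+\mathcal{O}(\epsilon)$. Passing $\epsilon\downarrow0$ yields a single limit identity posed on $\Omega\times Y$. Choosing $\vec{\phi}\equiv\vec{0}$ isolates the purely microscopic balance $\nabla_{\vec{y}}\cdot(\tens{E}\cdot(\nabla\vec{v}+\nabla_{\vec{y}}\mathcal{V})+\tens{D}\vec{v})=\vec{0}$ in $Y$; by linearity in $(\vec{v},\nabla\vec{v})$ I would make the ansatz $\mathcal{V}=\vec{W}\cdot\nabla\vec{v}+\tens{\delta}\vec{v}$, which splits this balance into exactly the two stated cell problems for $\vec{W}$ and $\tens{\delta}$, each solvable by Lax--Milgram on $H^1_\#(Y)/\mathbf{R}$ through the coercivity of $\tens{E}$ from (A2). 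Choosing instead $\vec{\phi}_1\equiv\vec{0}$ and substituting this corrector, the $Y$-average of the flux collapses to $\tens{E}^*\cdot\nabla\vec{v}+\tens{D}^*\vec{v}$ with $\tens{E}^*,\tens{D}^*$ as defined, while the term $\tens{J}^\epsilon\cdot\nabla\vec{U}^\epsilon$ leaves the residual $\tfrac{1}{|Y|}\int_Y\tens{J}\cdot\nabla_{\vec{y}}\mathcal{U}\,\mathrm{d}\vec{y}$; a density argument in $\vec{\phi}\in H^1_0(\Omega)^N$ then delivers the first equation of \textbf{(P$^0_w$)}.

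For the ordinary differential equation, since $\tens{L}$ and $\tens{G}$ are $\vec{y}$-independent by (A1), I would test with $\vec{\psi}\in L^2(\Omega)^N$ and pass directly to the limit, obtaining $\partial_t\vec{u}+\tens{L}\vec{u}-\tens{G}\vec{v}=\vec{0}$ weakly; the initial condition $\vec{u}(0)=\vec{U}_*$ transfers from $\vec{U}^\epsilon(0)=\vec{U}_*$ via the time-continuity of $\vec{u}\in H^1((0,T)\times\Omega)^N$. The microscopic (third) equation follows by differentiating the ODE in space, $\partial_t\nabla\vec{U}^\epsilon+\nabla(\tens{L}\vec{U}^\epsilon)=\nabla(\tens{G}\vec{V}^\epsilon)$, and extracting the $\nabla_{\vec{y}}$-fluctuation of the two-scale limit, using $\partial_t\nabla\vec{U}^{\epsilon'}\overset{2}{\longrightarrow}\partial_t\nabla\vec{u}+\partial_t\nabla_{\vec{y}}\mathcal{U}$ and $\nabla\vec{V}^{\epsilon'}\overset{2}{\longrightarrow}\nabla\vec{v}+\nabla_{\vec{y}}\mathcal{V}$ from Lemma \ref{H: l: : two-scale-conv}. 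This produces $\partial_t\nabla_{\vec{y}}\mathcal{U}+\tens{L}\nabla_{\vec{y}}\mathcal{U}=\tens{G}\nabla_{\vec{y}}\mathcal{V}$, and inserting $\mathcal{V}=\vec{W}\cdot\nabla\vec{v}+\tens{\delta}\vec{v}$ rewrites the right-hand side through $\tilde{\delta}=\nabla_{\vec{y}}(\tens{G}\tens{\delta})$ and $\tilde{\omega}=\nabla_{\vec{y}}\vec{W}\otimes\tens{G}$, which, after testing with $\vec{\xi}$ and integrating over $Y$, is the third equation of \textbf{(P$^0_w$)}; its initial condition $\nabla_{\vec{y}}\mathcal{U}(0)=\vec{0}$ holds because the $\vec{y}$-independent datum $\vec{U}_*$ carries no microscopic fluctuation.

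I expect the main obstacle to be the derivation and justification of the microscopic evolution equation for $\mathcal{U}$: unlike the classical elliptic cell problem, this corrector satisfies a time-dependent relation coupled to $\mathcal{V}$, so I must legitimately commute the spatial gradient with the time derivative at the $\epsilon$-level and then correctly identify the $\nabla_{\vec{y}}$-component of the two-scale limit of $\partial_t\nabla\vec{U}^{\epsilon'}$, relying on precisely the joint space--time two-scale convergences supplied by Lemma \ref{H: l: : two-scale-conv}. A secondary technical point is verifying that the linear ansatz $\mathcal{V}=\vec{W}\cdot\nabla\vec{v}+\tens{\delta}\vec{v}$ is admissible and that the $\tens{\delta}$-cell problem is well posed despite the presence of the drift tensor $\tens{D}$, which assumption (A3) controls against the coercivity constants of $\tens{E}$ and $\tens{M}$.
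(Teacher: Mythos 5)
Your proposal is correct and follows essentially the same route as the paper: oscillating two-scale test functions in \textbf{(P$^\epsilon_w$)}, term-by-term passage to the two-scale limit using Lemma \ref{H: l: : two-scale-conv}, the linear corrector ansatz $\mathcal{V}=\vec{W}\cdot\nabla\vec{v}+\tens{\delta}\vec{v}$ splitting the microscopic balance into the two cell problems, and the gradient of the ODE to obtain the evolution equation for $\nabla_{\vec{y}}\mathcal{U}$ (the paper only sketches this and defers the computations to the Licentiate thesis). One small correction: the $\tens{\delta}$-cell problem is solvable by Lax--Milgram on $H^1_\#(Y)/\mathbf{R}$ from the coercivity of $\tens{E}$ and periodicity alone; assumption (A3) is not what guarantees its well-posedness, it is only needed for the coercivity of the original $\epsilon$-problem.
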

\begin{proof} In \textbf{(P$^\epsilon_w$)}, we choose $\phi=\phi^\epsilon = \Phi\left(t,\vec{x},\frac{\vec{x}}{\epsilon}\right)$ and $\psi=\psi^\epsilon = \Psi(t,\vec{x})+\epsilon\vec{\varphi}\left(t,\vec{x},\frac{\vec{x}}{\epsilon}\right)$ for the test-functions $\Phi\in L^2((0,T);\mathcal{D}(\Omega;C^\infty_\#(Y)))^N$,
 $\Psi\in L^2((0,T);C^\infty_0(\Omega))^N$ and for $\vec{\varphi}\in L^2((0,T);\mathcal{D}(\Omega;C^\infty_\#(Y)))^N$. Two-scale convergence limits, see \cite{Allaire1992}, \cite{LukkassenNguetsengWall2002}, \cite{Nguetseng1989}, and cell-function arguments, see  \cite{MunteanChalupecky2011}, give \textbf{(P$^0_w$)}. Details in pages 97,98 of \cite{VromansLIC}.
 \qed\end{proof}
We have shown that upscaling system \textbf{(P$^\epsilon_w$)} yields system \textbf{(P$^0_w$)}. This system contains only PDEs with respect to $(t,\vec{x})$. However, an extra variable $\nabla_{\vec{y}}\mathcal{U}$ was needed. Removing $\nabla_{\vec{y}}\mathcal{U}$ needs the use of continuous semi-group theory, see papers 10 and 14 of \cite{Yosida1965}, for solving the third equation of system \textbf{(P$^0_w$)}. This leads to a non-local-in-time term as a consequence of removing $\nabla_{\vec{y}}\mathcal{U}$.
\section{Conclusion}
Our main goal of this paper is to show that the spatial-temporal decomposition, as employed in \cite{Pesz-Showalter-Yi2009}, allows for the straighforward upscaling of pseudo-parabolic equations, in specific for system \textbf{(Q$^\epsilon$)}. The upscaling procedure is here performed using the concept of two-scale convergence as reported in Section \ref{H: sec: 5}. Moreover, the decomposition is retained in the upscaled limit. A non-local-in-time term arose when an extra variable was eliminated. The spatial-temporal decoupling showed why this non-local term is non-local in time.\\
In future research we intend to investigate the applicability of the spatial-temporal decomposition of our pseudo-parabolic system to perforated periodic domains, corrector estimates (convergence speed estimate) and  high-contrast situations.
%
%

\end{document}